\ifdefined\ORStyle
  \documentclass[opre,dblanonrev]{informs4}
  \usepackage{eqndefns-left}     
  \RequirePackage{tgtermes}
  \IfFileExists{newtxtext.sty}{\RequirePackage{newtxtext}}{}
  \IfFileExists{newtxmath.sty}{\RequirePackage{newtxmath}}{}
  \RequirePackage{bm}
  \OneAndAHalfSpacedXII
  \EquationsNumberedThrough       
  \TheoremsNumberedThrough        
  \MANUSCRIPTNO{}                 

  \usepackage{natbib}
  \NatBibNumeric
  \bibpunct{[}{]}{,}{n}{}{,}      
  
  \def\bibsep{\smallskipamount}

  \usepackage{booktabs}
  \usepackage{mathtools}
  \usepackage{mathrsfs}
  \usepackage{bbm}      
  \usepackage{or-macros}          
\else
  \documentclass[11pt]{article}
  \usepackage[numbers]{natbib}
  \usepackage{packages}
  \usepackage{editing-macros}
  \usepackage{formatting}
  \usepackage{statistics-macros}

  \usepackage{booktabs}
  \usepackage{mathtools}
  \usepackage{mathrsfs}
  \usepackage[colorlinks=true,allcolors=blue]{hyperref}
\fi

\newcommand{\eif}{\psi}                       
\newcommand{\Veff}{V_{\mathrm{eff}}}          
\newcommand{\Px}{P_{0,X}}                      
\newcommand{\sens}{\mathsf{S}}                

\newcommand{\dd}{\,\mathrm{d}}
\newcommand{\Tot}{\mathcal{T}}                

\newcommand{\Wtwo}{W_2}

\ifdefined\ORStyle
  \RUNAUTHOR{Namkoong}
  \RUNTITLE{Local Sensitivity Under Transport Restrictions}
  \TITLE{Local Sensitivity Under Transport Restrictions}
  \ARTICLEAUTHORS{%
    \AUTHOR{Hongseok Namkoong}
    \AFF{Decision, Risk, and Operations Division, Columbia Business School,
      \EMAIL{namkoong@gsb.columbia.edu}}
  }
  \ABSTRACT{We quantify the value of structural knowledge, restrictions a modeler places
on the world before seeing data. Our analytic workhorse is the \emph{local
  sensitivity} of an estimand to distributional perturbations in the
Otto--Wasserstein geometry: the largest first-order change in the estimand per
unit displacement of probability mass (transport), equal to the dual norm of
the spatial gradient of the efficient influence function.  The modeler encodes
structural knowledge by restricting the class of transports, and the resulting
reduction in sensitivity is the value of their inductive bias.  We illustrate
by shedding light on a longstanding puzzle in causal inference where classical
semiparametric efficiency bounds remain identical regardless of whether the
propensity score is known~\cite{Hahn98}, despite observed practical
difficulties when it is unknown.  Our approach characterizes how known
propensities significantly reduce sensitivity to misspecification.

}
  \KEYWORDS{local sensitivity, optimal transport, semiparametric efficiency,
    causal inference, distributional robustness}
\fi

\begin{document}

\ifdefined\ORStyle
  \maketitle
\else
  \abovedisplayskip=8pt plus0pt minus3pt
  \belowdisplayskip=8pt plus0pt minus3pt

  \begin{center}
    {\huge Local Sensitivity Under Transport Restrictions} \\
    \vspace{.5cm} {\Large Hongseok Namkoong} \\
    \vspace{.2cm}
    {\large Decision, Risk, and Operations Division, Columbia Business School} \\
    \vspace{.2cm}
    \texttt{namkoong@gsb.columbia.edu}
  \end{center}

  \begin{abstract}%
    
  \end{abstract}
\fi


\section{Introduction}
\label{sec:intro}

An estimate or a decision rests on structural knowledge the modeler brings
before seeing data. Such knowledge is a restriction on what the world may look
like, narrowing the laws the modeler deems plausible. An uncertainty set in
robust optimization confines the law to a
neighborhood~\cite{BenTalElGhaouiNemirovski09}, a parametric family or
shrinkage target confines the density to a low-dimensional
manifold~\cite{Stein56, Box76}, and an assumption of no unobserved confounding
rules out the laws in which a hidden cause drives both treatment and
outcome~\cite{RosenbaumRu83}. The same restriction is called an inductive bias
in machine learning~\cite{Mitchell80, Wolpert96}, and we use these terms
interchangeably. We view the value of structural knowledge as the reduction it
produces in the \emph{fundamental difficulty}, a property of the estimand and
the restriction rather than the behavior of any single estimator.

Quantifying how much a particular assumption is worth on a given task has been
a sought-after goal of theorists. The classical theory of semiparametric
efficiency~\citep{LeCam73, Hajek72, BickelKlRiWe93, VanDerVaart98, Newey90,
  Tsiatis07} values a restriction by the reduction it produces in the best
achievable asymptotic variance. Here, local perturbations to the probability
density are represented as scores (reweighting of data), and changes in the
estimand are measured by its functional gradient $\eif$, dubbed the
\emph{efficient influence function}. Perturbations that move the estimand
most, weighed against their visibility in a finite sample, determine the
smallest variance any regular estimator can achieve---which is measured by the
length of $\eif$. Structural knowledge enters by ruling perturbations out, and
the principal device for doing so is the conditional moment restriction.  By
forbidding perturbations in directions that violate this constraint,
semiparametric efficiency theory quantifies the variance reduction achievable
by encoding inductive bias \citep{Chamberlain87}.

However, this classical statistical apparatus falls short in several
respects.  To illustrate, we turn to the discussion in causal inference
around the value of knowing the treatment assignment probabilities across all
covariates (propensity score). Consider an analyst who estimates the average
effect of a treatment from observational records and knows the propensity
score by design. Semiparametric efficiency theory says this knowledge is
worthless: \citet{Hahn98} showed that the asymptotic variance of an efficient
estimator is the same whether the propensity is known or estimated
nonparametrically.  Practitioners have reported otherwise, frequently finding
that a known or well-specified propensity behaves better, and \citet{KangSc07}
make the cautionary case in simulation: even a slightly misspecified
propensity can significantly destabilize (augmented) inverse
probability weighted estimators.  

Instead of considering local reweighting of data, we consider a geometric
displacement of probability mass (``transport'') as the distributional
perturbation and measure the first-order change in the estimand.  We view
model misspecification as a velocity field $v$ on the sample space; the law it
produces at scale $t$ is the pushforward $(\mathrm{Id} + tv)_\# P_0$, and the
first-order change in the estimand pairs $v$ against the spatial gradient
$\nabla\eif$ of the efficient influence function $\eif$. (As we review later,
this pairing follows from the continuity equation.)  Building on a large line
of work that expand a worst-case objective in the radius of a distributional
ball~\citep{Lam16wsc, Lam16, Lam18, GotohKimLim18,
  EsfahaniKu18,VolpiNaSeDuMuSa18, GhoshLam19, BlanchetMu19, DuchiGlNa21,
  BartlDrObWi21, GaoKl22}, we consider the largest first-order change caused
by local displacement of probability mass, per unit transport budget.  This
gives rise to a local sensitivity measure
\begin{equation*}
  \sup_{v \in \Tot,\, \|v\|_{\Tot} \le 1}\, \E_{P_0}[\nabla\eif\cdot v] \;=\;
\|\nabla\eif\|_{\Tot^\star},
\end{equation*}
the dual norm of $\nabla\eif$ over a class of permissible velocities
$\mathcal{T}$. 

A pragmatic advantage of this \emph{geometric measure} of local sensitivity is
that structural assumptions that may be hard to encode as conditional moment
restrictions can be naturally encoded into the class $\mathcal{T}$, and the
resulting geometry turns the qualitative trade-off of the opening into a
quantitative one.  Each such assumption projects $\nabla\eif$ onto a subspace, the
projected dual norm is the sensitivity that survives, and the drop from the
unrestricted gradient energy is what the assumption is worth. In this
framework, restriction on displacements/velocities $v$ is the inductive bias
of the modeler, the projected dual norm is its consequence, and the gap
between the unprojected and projected gradient energies is its value.  As we
illustrate shortly, even for well-studied estimands the local sensitivity
under transport restrictions can shed new light on the role of inductive bias
in ways that classical tools like semiparametric efficiency remain oblivious
to.

However, we do not claim a new mathematical theory. We only look at a
derivative, and the derivative cannot be new; our illustrations are thus
elementary, requiring a velocity field, the continuity equation, and a single
integration by parts.  The first-order change of an estimand under an
infinitesimal transport of the law is the derivative of a performance measure
under an infinitesimal perturbation of the input model, an object studied for
decades in stochastic simulation~\citep{Glasserman91, HoCao91, Glynn90,
  LEcuyer90, ReimanWeiss89, RubinsteinShapiro93, Pflug96, Fu06}. The
particular form we use is a standard object of the Otto--Wasserstein
calculus~\citep{Otto01, Villani09}; \citet{BlanchetMu19, BartlDrObWi21,
  GaoKl22} study sensitivity without significant restrictions on the
perturbations, and we simply highlight how an identical concept can provide
new analytic insights when inductive bias is encoded in the form of transport
restrictions.

We begin by reviewing the well-established Otto--Wasserstein calculus
(Section~\ref{sec:sensitivity}) and quantify the value of inductive bias for a
common estimand in causal inference (Section~\ref{sec:causal}).  In
particular, we show that a known propensity can significantly lower
sensitivity to misspecification, shedding new light on a longstanding
puzzle~\cite{Hahn98, KangSc07}.  Because semiparametric efficiency prices an
assumption only through the sampling variance, an assumption that leaves the
variance unchanged is assigned no value, even when it plainly alters the
estimate's exposure to a misspecified model.


\section{Local sensitivity under transports}
\label{sec:sensitivity}

Let the observation $O \in \mathcal{O}$ be drawn from a law $P_0$, and let
$\theta: \mathcal{P} \to \mathbb{R}$ be a functional on a class of laws,
pathwise differentiable at $P_0$ with efficient influence function
$\eif \in L^2_0(P_0)$: for every smooth submodel $\{P_t\}$ through $P_0$ with
score $h = \partial_t\log p_t|_{t=0}$,
\begin{equation}
  \frac{d}{dt}\theta(P_t)\Big|_{t=0} = \E_{P_0}[\eif(O)\,h(O)] = \langle\eif, h\rangle_{L^2(P_0)}.
  \label{eq:pathwise}
\end{equation}

We are interested in how the estimand $\theta_0 = \theta(P_0)$ responds when
the true law is not $P_0$ but a nearby law obtained by displacing mass through
the sample space.  We define displacement as a velocity field
$v: \mathcal{O} \to \mathbb{R}^d$ on the continuous coordinates, and the law
it produces at scale $t$ is the pushforward
\begin{equation}
  P_t = (\mathrm{Id} + tv)_\# P_0,
  \label{eq:pushforward}
\end{equation}
the law of $O + tv(O)$ when $O \sim P_0$. The map $t \mapsto P_t$ is a curve
through $P_0$ whose velocity at $t = 0$ is $v$, the Otto--Wasserstein tangent
vector at $P_0$ \citep{Otto01, AmbrosioGiSa08, Villani09}.

We rely on the following regularity assumption throughout.
\begin{assumption}[Regularity]
\label{ass:reg}
Write $O = (A, Z)$ with $A$ the discrete coordinates and $Z$ the continuous
ones. Within each stratum $\{A = a\}$, the continuous coordinates have a $C^1$
density positive on an open support, and the efficient influence function
$\eif$ is $C^1$ in them with $\nabla\eif \in L^2(P_0)$. Each velocity field
$v$ displaces the continuous coordinates alone, is $C^1$ in them, induces a
square-integrable score, and carries its mass to zero at the boundary of the
support.
\end{assumption}

\noindent
We ask for regularity within each discrete stratum rather than for the joint
law, because the continuous coordinates can differ from one stratum to the
next.  An outcome recorded only under treatment, for instance, has no density
on the untreated units, so no single support describes the whole sample space.
Conditioning on the discrete coordinates asks for smoothness only where mass
can move.

\subsection{Otto--Wasserstein calculus}
\label{sec:firstorder}

The pushforward path is a smooth submodel, and the continuity equation gives
its score. Within each stratum the density $p$ of $P_t$ solves
$\partial_t p + \nabla\!\cdot\!(p v) = 0$ at first order, so
$\partial_t p|_{t=0} = -\nabla\!\cdot\!(p v)$ and the path carries the score
\begin{equation}
  g_v = \partial_t\log p_t\big|_{t=0} = -\frac{\nabla\!\cdot\!(p v)}{p}.
  \label{eq:gv}
\end{equation}
Under Assumption~\ref{ass:reg}, $g_v \in L^2_0(P_0)$: it integrates to zero by
the divergence theorem and is square-integrable by construction.
By pathwise differentiability~\eqref{eq:pathwise} with
$h = g_v = -\nabla\!\cdot\!(pv)/p$ and integration by parts
\[
  \frac{d}{dt}\theta(P_t)\Big|_{t=0}
  = \langle\eif, g_v\rangle_{L^2(P_0)}
  = \int \eif\,\bigl(-\nabla\!\cdot\!(pv)\bigr)\dd\mu
  = \int \nabla\eif\cdot(pv)\dd\mu
  = \E_{P_0}[\nabla\eif\cdot v],
\]
where we used that $pv \to 0$ at the boundary (Assumption~\ref{ass:reg}). We
record this well-known identity~\cite{Otto01, AmbrosioGiSa08,
  Villani09, Santambrogio15} for later reference.
\begin{proposition}[Transport derivative]
\label{prop:transport-derivative}
Under Assumption~\ref{ass:reg}, the pushforward path~\eqref{eq:pushforward} is a smooth submodel
with score $g_v$ of~\eqref{eq:gv}, and
\begin{equation}
  \frac{d}{dt}\theta(P_t)\Big|_{t=0} = \E_{P_0}[\nabla\eif(O)\cdot v(O)].
  \label{eq:transport-derivative}
\end{equation}
\end{proposition}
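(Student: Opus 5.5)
The plan has two parts. First, show that the pushforward path is a smooth submodel whose score is $g_v$. Then apply pathwise differentiability~\eqref{eq:pathwise} with $h=g_v$ and integrate by parts stratum by stratum. Because $v$ moves only the continuous coordinates $Z$, the map $\mathrm{Id}+tv$ fixes $A$. So $P_t(A=a)=P_0(A=a)$ for every $a$, and the problem reduces to each stratum $\{A=a\}$ with its $C^1$ conditional density $p(\cdot\mid a)$ on an open support.

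Within a stratum, write $T_t(z)=z+tv(z)$. Since $v$ is $C^1$, $DT_t=I+t\,Dv$ is invertible for small $|t|$, at least locally. Restricting to the region where $v$ and $Dv$ are controlled, or using the $C^1$ hypothesis together with a truncation argument, $T_t$ is a $C^1$ diffeomorphism onto its image. The change-of-variables formula then gives
\[
p_t(T_t(z)) = \frac{p(z)}{\det(I+t\,Dv(z))}.
\]
Differentiating at $t=0$, and using $\partial_t\det(I+tDv)|_{t=0}=\nabla\!\cdot v$, yields
\[
\partial_t p_t|_{t=0}+\nabla p\cdot v=-p\,\nabla\!\cdot v,
\]
that is, $\partial_t p_t|_{t=0}=-\nabla\!\cdot(pv)$. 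This is the first-order continuity equation, and it gives the score $g_v$ of~\eqref{eq:gv}.

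Next comes smoothness in the sense required by~\eqref{eq:pathwise}, namely quadratic-mean differentiability: $\int(\sqrt{p_t}-\sqrt{p}-\tfrac t2 g_v\sqrt p)^2\dd\mu=o(t^2)$. This is where I expect the main obstacle. Assumption~\ref{ass:reg} only gives that $g_v\in L^2(P_0)$; it does not give uniform control of the remainder. My first attempt would use the classical criterion (e.g.\ van der Vaart, Lemma~7.6). If $t\mapsto\sqrt{p_t(z)}$ is continuously differentiable for each $z$, and the Fisher information $\int(\partial_t p_t)^2/p_t\dd\mu$ is finite and continuous in $t$ near $0$, then the path is quadratic-mean differentiable with score $g_v$. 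Pointwise differentiability follows from the explicit formula above. For continuity of the information, I would lean on the standing requirement that $v$ ``induces a square-integrable score,'' reading it, as the paper implicitly does, as holding along the path near $t=0$. In addition, $g_v$ has mean zero: by the divergence theorem $\int\nabla\!\cdot(pv)\dd\mu=0$, because $pv$ vanishes at the boundary of the support. So $g_v\in L^2_0(P_0)$.

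With the submodel established, pathwise differentiability gives $\frac{d}{dt}\theta(P_t)|_{t=0}=\E_{P_0}[\eif\, g_v]$. On each stratum this equals $-\int\eif\,\nabla\!\cdot(pv)\dd\mu$. By the product rule, $\nabla\!\cdot(\eif\,pv)=\nabla\eif\cdot pv+\eif\,\nabla\!\cdot(pv)$. The divergence theorem kills the boundary term $\int\nabla\!\cdot(\eif pv)\dd\mu$, since $pv\to0$ at the boundary and $\eif$ is $C^1$; for unbounded supports I would approximate with cutoff functions and pass to the limit using $\nabla\eif\in L^2(P_0)$ and $\eif\in L^2(P_0)$. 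What remains is $\int\nabla\eif\cdot v\,p\dd\mu$. Cauchy--Schwarz shows this is finite, because $\nabla\eif\in L^2(P_0)$ and $v$ has the integrability that comes with its score. Summing over strata, weighted by $P_0(A=a)$, gives $\E_{P_0}[\nabla\eif\cdot v]$, which is~\eqref{eq:transport-derivative}.
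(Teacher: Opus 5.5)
Your proposal follows the paper's argument. The first-order continuity equation gives the score $g_v=-\nabla\!\cdot\!(pv)/p$; you derive it from the Jacobian change of variables, whereas the paper just asserts it. Pathwise differentiability~\eqref{eq:pathwise} with $h=g_v$ then gives $\langle\eif,g_v\rangle_{L^2(P_0)}$, and one integration by parts per stratum, using $pv\to0$ at the boundary, yields $\E_{P_0}[\nabla\eif\cdot v]$. Your quadratic-mean-differentiability step goes beyond the paper, which simply asserts that the path is a smooth submodel.

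Two small cautions. First, the criterion you cite needs $t\mapsto\sqrt{p_t(z)}$ to be continuously differentiable on a neighborhood of $t=0$, and for $v$ merely $C^1$ this is not automatic away from $t=0$. Second, a square-integrable score $g_v$ does not by itself give $v\in L^2(P_0)$. Your Cauchy--Schwarz finiteness claim and your cutoff argument therefore rest on $v\in L^2(P_0)$, which the paper also assumes without stating it.
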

\noindent
The first-order change in the estimand pairs the velocity against the spatial
gradient $\nabla\eif$ of the efficient influence function. A constant
influence function (across the continuous coordinates) has $\nabla\eif = 0$
and zero sensitivity, so that the estimand does not respond to mass
displacement, whereas a steep influence function has large gradient energy
regardless of the average size of $\eif$.

We define local sensitivity as the worst displacement in a class of
permissible perturbations.
\begin{definition}[Local sensitivity]
\label{def:sensitivity}
A \emph{velocity class} $\Tot$ is a set of vector fields on $\mathcal{O}$
carrying a norm $\|\cdot\|_{\Tot}$; it is the set of local perturbations the
analyst deems permissible, and its norm is the budget that sizes them. The
local sensitivity of $\theta$ over $\Tot$ at $P_0$ is the largest
first-order change in $\theta$ per unit budget of permissible displacement
\begin{equation}
  \sens(\theta; \Tot) \;:=\; \sup_{v \in \Tot,\, \|v\|_{\Tot} \le 1}\,
  \E_{P_0}[\nabla\eif\cdot v]
  \;=\; \|\nabla\eif\|_{\Tot^\star}~~~\mbox{dual norm over $\Tot$.}
  \label{eq:sensitivity}
\end{equation}
\end{definition}
Without any restrictions on the possible displacements so that $\Tot$ equals
all $L^2(P_0)$ velocity fields, we have
$\sens(\theta; \Tot) = \sqrt{\E_{P_0}\|\nabla\eif(O)\|^2}$ and the supremum
is attained at $v^\star \propto \nabla\eif$.

\subsection{Permissible perturbations as inductive bias}
\label{sec:restrictions-bias}

Our local sensitivity measure~\eqref{eq:sensitivity} allows the modeler to
encode structural knowledge of which local perturbations are permissible. In
particular, we can explicitly write it as a projection of $\nabla \eif$.
\begin{proposition}[Inductive bias as a velocity restriction]
\label{prop:inductive-bias}
Let $\Tot$ be a closed convex cone of velocity fields in $L^2(P_0; \mathbb{R}^d)$ with the native
norm. Then
\begin{equation}
  \sens(\theta; \Tot)
  = \sup_{v \in \Tot,\, \|v\|_{L^2(P_0)} \le 1} \E_{P_0}[\nabla\eif\cdot v]
  = \|\Pi_{\Tot}\nabla\eif\|_{L^2(P_0)}
  \;\le\; \sqrt{\E_{P_0}\|\nabla\eif\|^2},
  \label{eq:restriction-drop}
\end{equation}
where $\Pi_{\Tot}$ is the metric projection onto the cone $\Tot$, the orthogonal projection when
$\Tot$ is a subspace. The inequality is strict whenever $\nabla\eif$ has a nonzero component
outside $\Tot$ on a set of positive measure.
\end{proposition}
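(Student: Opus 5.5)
The plan is to reduce everything to Hilbert-space geometry in $H = L^2(P_0;\mathbb{R}^d)$ with inner product $\langle u, w\rangle = \E_{P_0}[u\cdot w]$. Write $g = \nabla\eif \in H$, which is legitimate because Assumption~\ref{ass:reg} gives $\nabla\eif \in L^2(P_0)$. By Proposition~\ref{prop:transport-derivative}, the objective in Definition~\ref{def:sensitivity} is the linear functional $v \mapsto \langle g, v\rangle$. So the first equality in~\eqref{eq:restriction-drop} is just the definition with the native norm, and what must be shown is
\[
\sup_{v\in\Tot,\ \|v\|\le 1}\langle g, v\rangle = \|\Pi_\Tot g\|.
\]

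For a nonempty closed convex cone $\Tot \subseteq H$, the metric projection $\Pi_\Tot g$ exists and is unique. I will use its standard variational characterization (Moreau): $\langle g - \Pi_\Tot g, v - \Pi_\Tot g\rangle \le 0$ for all $v \in \Tot$. Since $\Tot$ is a cone, substituting $v = 0$ and $v = 2\Pi_\Tot g$ yields
\[
\langle g - \Pi_\Tot g, \Pi_\Tot g\rangle = 0 \quad\text{and}\quad \langle g - \Pi_\Tot g, v\rangle \le 0 \text{ for all } v\in\Tot.
\]

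From these two facts the two inequalities follow.

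\textbf{Upper bound.} For $v \in \Tot$ with $\|v\|\le 1$,
\[
\langle g, v\rangle = \langle \Pi_\Tot g, v\rangle + \langle g-\Pi_\Tot g, v\rangle \le \|\Pi_\Tot g\|
\]
by Cauchy--Schwarz.

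\textbf{Lower bound.} If $\Pi_\Tot g \ne 0$, take $v^\star = \Pi_\Tot g/\|\Pi_\Tot g\| \in \Tot$. The orthogonality identity gives $\langle g, v^\star\rangle = \|\Pi_\Tot g\|$. If $\Pi_\Tot g = 0$, the upper bound already gives a supremum $\le 0$, and $v = 0 \in \Tot$ attains $0$.

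When $\Tot$ is a closed subspace, the cone projection coincides with the orthogonal projection, because the variational inequality applied to $\pm v$ forces $g - \Pi_\Tot g \perp \Tot$.

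For the comparison with $\sqrt{\E\|\nabla\eif\|^2}$, orthogonality gives the Pythagorean identity
\[
\|g\|^2 = \|\Pi_\Tot g\|^2 + \|g - \Pi_\Tot g\|^2,
\]
so $\|\Pi_\Tot g\| \le \|g\|$. The inequality is strict exactly when $g - \Pi_\Tot g \neq 0$ in $H$, that is, when $g \notin \Tot$.

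The main obstacle is interpreting the strictness clause. ``$\nabla\eif$ has a nonzero component outside $\Tot$ on a set of positive measure'' has to be read as $g \notin \Tot$, equivalently $g - \Pi_\Tot g \neq 0$ as an element of $L^2$. Any nonzero $L^2$ element is nonzero on a set of positive measure, so under that reading the claim follows from the Pythagorean identity.

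A second point to check is that the permissible $v$ in Definition~\ref{def:sensitivity} must also satisfy Assumption~\ref{ass:reg} ($C^1$, vanishing boundary flux). I would handle this by taking $\Tot$ to be the $L^2$-closure of such fields. The supremum of the continuous linear functional $\langle g,\cdot\rangle$ over the unit ball of a cone equals its supremum over the unit ball of the closure, so it is unchanged. I would state this density step explicitly rather than prove it.
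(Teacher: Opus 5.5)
Your proof is correct and follows the paper's argument. The paper invokes the Moreau decomposition $\nabla\eif = \Pi_{\Tot}\nabla\eif + \Pi_{\Tot^\circ}\nabla\eif$ directly, and that decomposition packages exactly the orthogonality and polar-cone inequality you derive from the variational characterization, followed by the same Cauchy--Schwarz bound, attainment at the normalized projection, and Pythagorean gap $\|\Pi_{\Tot^\circ}\nabla\eif\|^2$. Your treatment of the case $\Pi_{\Tot}\nabla\eif = 0$ (where the paper's stated maximizer is undefined) and your remark on passing to the $L^2$-closure of regular fields are small additions that the paper leaves implicit.
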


\begin{proof}
The Moreau decomposition of a closed convex cone writes $\nabla\eif = \Pi_{\Tot}\nabla\eif +
\Pi_{\Tot^\circ}\nabla\eif$ with the two pieces orthogonal and $\Pi_{\Tot^\circ}\nabla\eif$ in the
polar cone $\Tot^\circ$, so $\langle\nabla\eif, \Pi_{\Tot}\nabla\eif\rangle =
\|\Pi_{\Tot}\nabla\eif\|^2$. For any $v \in \Tot$ with $\|v\|_{L^2(P_0)} \le 1$,
\[
  \langle\nabla\eif, v\rangle = \langle\Pi_{\Tot}\nabla\eif, v\rangle +
  \langle\Pi_{\Tot^\circ}\nabla\eif, v\rangle \le \langle\Pi_{\Tot}\nabla\eif,
  v\rangle \le \|\Pi_{\Tot}\nabla\eif\|_{L^2(P_0)},
\]
the first inequality because
$\langle\Pi_{\Tot^\circ}\nabla\eif, v\rangle \le 0$ for $v \in \Tot$, the
second by Cauchy--Schwarz; equality holds at
$v = \Pi_{\Tot}\nabla\eif/\|\Pi_{\Tot}\nabla\eif\|$.  The gap to the
unrestricted energy is $\|\Pi_{\Tot^\circ}\nabla\eif\|^2$, positive whenever
$\nabla\eif$ leaves $\Tot$ on a set of positive measure.
\end{proof}

The gap $\E_{P_0}\|\nabla\eif\|^2 - \|\Pi_{\Tot}\nabla\eif\|^2_{L^2(P_0)}$ is
the part of the gradient energy the structural assumption rules out, the value
of the modeler's inductive bias. The shape of the permissible velocity class
$\Tot$ decides this gap, not its size alone.
\begin{itemize}\itemsep0pt
  \item A \emph{magnitude budget} on $\|v\|_{L^2(P_0)}$ alone is the unrestricted Wasserstein
  ball; its sensitivity is the unrestricted gradient energy.
\item A \emph{subspace restriction} $\Tot = V$ asserts that displacements lie
  in a specified $V \subset L^2(P_0; \mathbb{R}^d)$; its sensitivity is
  $\|\Pi_V \nabla\eif\|_{L^2(P_0)}$.  Examples include the known-propensity
  restriction $V = \{v : v_x(x)\cdot\nabla\pi_0(x) = 0 \text{ a.s.}\}$ in
  Section~\ref{sec:known-prop} and the confounding restriction
  $V = \{(0, v_y)\}$ in Section~\ref{sec:confounding}.
  \item A \emph{cone restriction} $\Tot = \{v : C(v) \ge 0\}$ encodes a one-sided belief such as
  monotone confounding; its sensitivity is the support function of $\nabla\eif$ over the
  cone.
  \item A \emph{product structure} $\Tot = \{v(o) = \oplus_i v_i(o_i)\}$ asserts that the sample
  space factors; its sensitivity decomposes additively across factors.
  \item A \emph{weighted budget} $\E_{P_0}[v^\top W(o)\,v] \le 1$ with a coordinate-dependent metric
  $W(o) \succ 0$ prices displacement differently across directions. This budget is an ellipsoid
  rather than a cone, so it falls outside Proposition~\ref{prop:inductive-bias}; its sensitivity
  comes from rescaling the metric rather than projecting $\nabla\eif$, and is the dual-weighted
  gradient energy $\sqrt{\E_{P_0}[\nabla\eif^\top W^{-1}\nabla\eif]}$, attained at
  $v^\star \propto W^{-1}\nabla\eif$. Charging displacement heavily along a coordinate encodes a
  smoothness prior, that plausible misspecification cannot move the outcome much along that
  direction, and shrinks the matching component of $\nabla\eif$.
\end{itemize}

\paragraph{Relation to Wasserstein DRO}
The local sensitivity~\eqref{eq:sensitivity} is the first-order slope of a
Wasserstein distributionally robust bound. The pushforward
$(\mathrm{Id} + \delta v)_\# P_0$ sits within $W_2$-distance
$\delta\|v\|_{L^2(P_0)}$ of $P_0$, and by Brenier's theorem every law in a
small $W_2$-ball is, to first order, such a pushforward along the optimal
transport map \citep{Villani09, Santambrogio15}.  When a functional Taylor
expansion holds uniformly over the ball, the Wasserstein DRO radius expansion
gives~\cite{EsfahaniKu18, VolpiNaSeDuMuSa18, BlanchetMu19, BartlDrObWi21,
  GaoKl22}
\begin{equation}
  \sup_{Q:\, \Wtwo(Q, P_0) \le \delta} \bigl|\theta(Q) - \theta(P_0)\bigr|
  = \delta\,\sqrt{\E_{P_0}\|\nabla\eif\|^2} + o(\delta).
  \label{eq:dro-firstorder}
\end{equation}
These robust analyses take the worst case over every distribution within a
radius, with no structure on which departures are plausible, so restricting
the velocity class plays the role of specifying the uncertainty set. A
velocity field is a natural place to encode an inductive bias, since a belief
about how the world can depart from the model becomes a restriction on $v$,
and the projected gradient returns the sensitivity in closed form.

Both Wasserstein DRO and our local sensitivity share a key limitation that is
worth explicit mention: displacement of probability mass can only happen
through continuous coordinates, a purely discrete observation carries no
velocity unless explicitly embedded in a continuous space.

\paragraph{Relation to semiparametric efficiency}
The pairing identity $\langle\nabla\eif, v\rangle = \langle\eif, g_v\rangle$
makes $\nabla\eif$ appear in efficiency bounds:
\begin{equation}
  \sup_{v:\, g_v \neq 0}\,\frac{\langle\nabla\eif, v\rangle^2_{L^2(P_0)}}{\|g_v\|^2_{L^2(P_0)}}
  = \sup_{g \in M,\, g \neq 0}\,\frac{\langle\eif, g\rangle^2_{L^2(P_0)}}{\|g\|^2_{L^2(P_0)}}
  = \bigl\|\Pi_M \eif\bigr\|^2_{L^2(P_0)},
  \label{eq:efficiency-bound}
\end{equation}
where $M = \overline{\Phi(\Tot)}$ collects the scores transport can induce
through $\Phi: v \mapsto g_v = -\nabla\!\cdot\!(pv)/p$; the worst case is the
projection of $\eif$ onto them, at most the H\'ajek--Le Cam variance and equal
to it when every coordinate is continuous.

Local sensitivity~\eqref{eq:sensitivity} sizes a perturbation by the transport
distance $\|v\|_{L^2(P_0)}$, in contrast to semiparametric efficiency theory
which uses the Fisher length of its score, $\|g_v\|_{L^2(P_0)}$, the
information a sample spends to detect it. The two lead to different values
since $g_v$ carries the \emph{divergence of $v$} while $\|v\|$ does not; the
gradient energy can diverge while the efficient variance stays finite, meaning
that an estimand can be easy to estimate yet fragile to misspecification.

\paragraph{Relation to sensitivity analysis in causal inference}
Similar to the contrast between local vs. global sensitivity in stochastic
simulation that summarizes the response over a whole perturbation class by its
variance or its extremes \citep{Sobol01, SaltelliEtAl08}, our approach
contrasts with the partial identification / (global) sensitivity analysis
literature in causal inference which bounds an identified region when a
structural assumption fails by a bounded amount.  Here, analogous to
semiparametric efficiency that introduces local perturbations to the
probability density under a reweighting budget, several authors restrict how a
law is tilted under unobserved confounding~\citep{Rosenbaum87, Rosenbaum02,
  Tan06}.

The choice of \emph{which} norm sizes a perturbation separates the geometric
restrictions developed here from the reweighting restrictions of the classical
sensitivity analysis literature. The continuity-equation map
$\Phi: v \mapsto g_v = -\nabla\!\cdot\!(pv)/p$~\eqref{eq:gv} turns a
displacement into the local reweighting it induces, and the two sides of the
map carry different norms of the same perturbation.  A bound on $g_v$
constrains the local creation of mass (how much the density is reweighted); a
bound on $v$ constrains the spatial displacement (how far mass moves) and is
the natural object for a Lipschitz or physical-magnitude assumption.


\section{A case study in causal inference}
\label{sec:causal}

To demonstrate how a modeler can analyze local sensitivity across a range of
different model misspecifications, we first study a standard estimand in
causal inference, the mean response under selection bias. Formally, we
consider observations $O = (X, A, Y)$ with covariate $X$, selection indicator
$A \in \{0,1\}$, and response $Y = AY(1)$. We focus on the mean potential
outcome $\theta_0 = \E[Y(1)]$ for simplicity; the treatment-effect estimand
$\E[Y(1) - Y(0)]$ contributes a symmetric control-arm term to each
influence-function gradient, and every insight below has an immediate analogue
there.

We assume standard identification conditions: i) unconfoundedness
$Y(1) \perp A \mid X$ and ii) overlap
$\pi_0(x) := P_0(A = 1 \mid X = x) \ge \eta > 0$. The target is
\[
  \theta_0 = \E_{P_0}[Y(1)] = \E_{\Px}[\mu_1(X)]
  ~~~\mbox{where}~~~\mu_1(x) = \E_{P_0}[Y \mid X = x, A = 1].
\]
It is well-known that the efficient influence function of $\theta_0$ is given
by the augmented inverse-propensity-weighted form \citep{Hahn98, RobinsRoZh94}
\begin{equation}
  \eif(o) = \mu_1(x) - \theta_0 + \frac{a}{\pi_0(x)}\bigl(y - \mu_1(x)\bigr).
  \label{eq:eif-aipw}
\end{equation}
The classical semiparametric efficiency bound gives the asymptotic variance
\begin{equation*}
  \Veff = \var\left(\mu_1(X)\right) + \E\left[\frac{\sigma_1^2(X)}{\pi_0(X)}\right]
  ~~~\mbox{where}~~~\sigma_1^2(X) := \var(Y \mid X, A = 1).
\end{equation*}
The efficient variance depends on the zero-th order values of the nuisances.

\begin{figure}[t]
  \centering
  \includegraphics[width=0.95\textwidth]{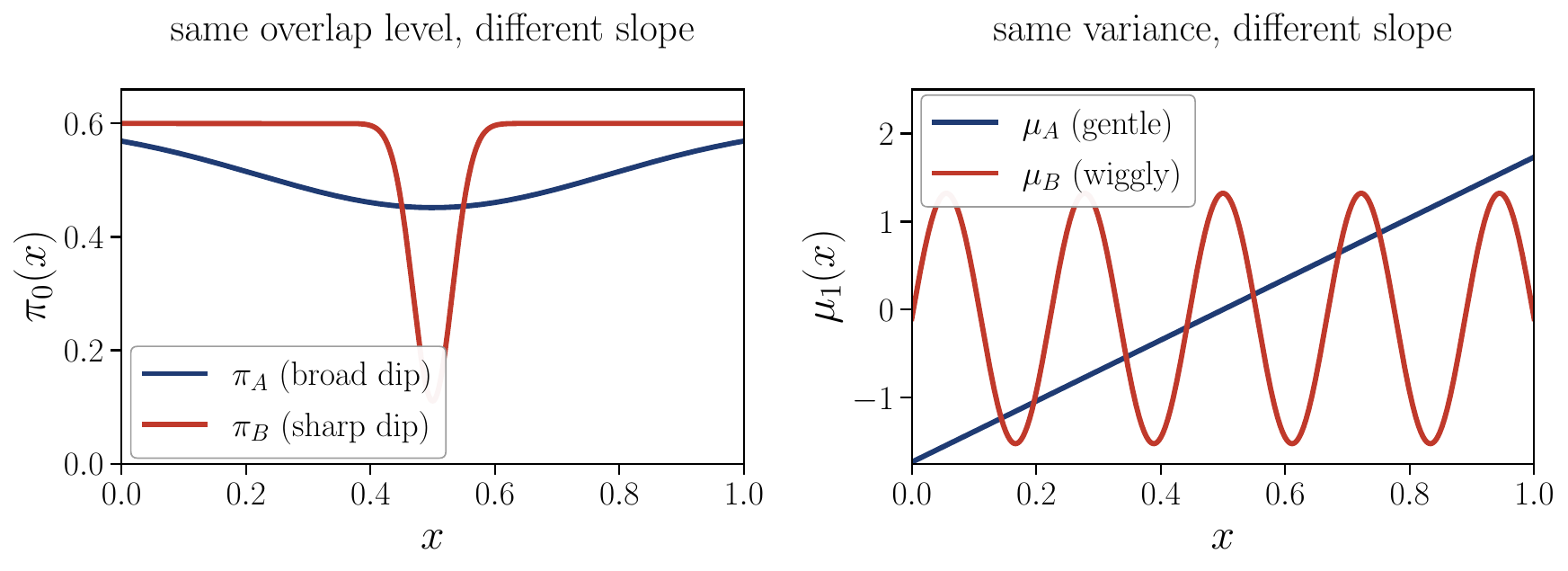}
  \caption{Same level, different slope. \emph{Left:} two propensities with
    identical overlap level $\E[1/\pi_0] = 2.00$, hence identical confounding
    sensitivity, but propensity-slope terms $\E[\|\nabla\pi_0\|^2/\pi_0^3]$ of
    $0.47$ and $455$, nearly a thousandfold difference in covariate-shift
    fragility driven by how steeply overlap degrades, not how low it gets.
    \emph{Right:} two regressions with identical $\mathrm{Var}(\mu_1) = 1$,
    hence identical contribution to the efficient variance, but gradient
    energies $\E\|\nabla\mu_1\|^2$ of $12$ and $808$. A wiggly regression with
    small variance is far more exposed to covariate shift than a monotone one
    with the same variance.}
  \label{fig:slope}
\end{figure}

On the other hand, the local sensitivity carries derivatives of nuisances.
The selection indicator $A$ is discrete and not transported; the continuous
coordinate is $x$ on every unit, and because $Y = A\,Y(1)$ the response $y$
carries a density only on the treated, so Assumption~\ref{ass:reg} applies per
arm as discussed there.
Taking the spatial gradient of the efficient influence function~\eqref{eq:eif-aipw}, we get
\begin{subequations}
  \label{eq:grad-mean}
  \begin{align}
  \partial_y\eif(o) &= \frac{a}{\pi_0(x)}, \label{eq:grad-y}\\
  \partial_x\eif(o) &= \nabla\mu_1(x)\Bigl(1 - \frac{a}{\pi_0(x)}\Bigr)
  - \frac{a\,(y - \mu_1(x))}{\pi_0(x)^2}\,\nabla\pi_0(x). \label{eq:grad-x}
\end{align}
\end{subequations}
Without restrictions, we have the following local sensitivity.
\begin{equation}
  \E_{P_0}\|\nabla\eif\|^2
  = \underbrace{\E_{P_0}\!\Bigl[\bigl(\tfrac{1}{\pi_0(X)} - 1\bigr)\|\nabla\mu_1(X)\|^2\Bigr]}_{\text{regression slope}}
  + \underbrace{\E_{P_0}\!\Bigl[\tfrac{\sigma_1(X)^2\,\|\nabla\pi_0(X)\|^2}{\pi_0(X)^3}\Bigr]}_{\text{propensity slope}}
  + \underbrace{\E_{P_0}\!\Bigl[\tfrac{1}{\pi_0(X)}\Bigr]}_{\text{outcome coordinate}}.
  \label{eq:grad-energy-aipw}
\end{equation}

To gain intuition, consider perturbations that displace the covariate alone,
$\Tot_{\mathrm{cov}} = \{v = (v_x, 0)\}$. This gives the covariate block
$\E_{P_0}\!\Bigl[\bigl(\tfrac{1}{\pi_0} - 1\bigr)\|\nabla\mu_1\|^2\Bigr] +
\E_{P_0}\!\Bigl[\tfrac{\sigma_1^2\,\|\nabla\pi_0\|^2}{\pi_0^3}\Bigr]$ of the
gradient energy~\eqref{eq:grad-energy-aipw}.  Both carry the nuisance
gradients $\nabla\mu_1$ and $\nabla\pi_0$, which the semiparametric efficiency
bound never sees: $\Veff$ reads only the levels $\mathrm{Var}(\mu_1)$ and
$\E[\sigma_1^2/\pi_0]$.

This tells us that the estimand is fragile to covariate shift in proportion to
how fast the nuisances move across the sample space, a rate the level-based
diagnostics~\cite{CrumpHoImMi09} that read the zero-th order value of $\pi_0$
alone cannot see. Two designs identical at the level of the nuisances can
therefore differ sharply in covariate-shift fragility, driven by how steeply
the nuisances move. Figure~\ref{fig:slope} fixes each level and varies the
slope: two propensities with the same $\E[1/\pi_0] = 2.00$ whose
propensity-slope terms differ by nearly a thousandfold, and two regressions
with the same $\mathrm{Var}(\mu_1) = 1$ whose gradient energies
$\E\|\nabla\mu_1\|^2$ differ by a factor of nearly seventy. (Here,
$X \sim \mathrm{Unif}[0,1]$, $\pi_0(x) = 0.6 - d\,e^{-((x - 0.5)/w)^2}$ with
$(w, d) = (0.40, 0.148)$ for the broad dip and $(0.045, 0.488)$ for the sharp
one, each depth chosen so $\E[1/\pi_0] = 2$; the regressions are
$\mu_1(x) \propto x$ and $\mu_1(x) \propto \sin(9\pi x)$, each standardized to
$\mathrm{Var}(\mu_1) = 1$.)

In the following, we illustrate how restrictions on the velocity field can be
used to characterize the value of a correctly specified propensity, and
provide insights on when the estimand is sensitive to unobserved confounding.

\subsection{Knowing the propensity lowers the sensitivity}
\label{sec:known-prop}

We use the local sensitivity framework to study a longstanding gap between
theory and practice in causal inference.  \citet{Hahn98} showed that the
efficient influence function~\eqref{eq:eif-aipw} is orthogonal to the
propensity-score subspace, so the efficient variance is the same under known
and unknown propensity, i.e., knowing $\pi_0$ buys nothing in asymptotic
variance. Working analysts report the opposite anecdotally, that a known or
well-specified propensity behaves better, and tend to dismiss the efficiency
result as a formal curiosity. \citet{KangSc07} confirm this anecdotal evidence
in a controlled simulation setting: under an even slightly misspecified
propensity, inverting the near-zero fitted weights in regions of thin overlap
destabilizes the inverse probability weighted and augmented inverse
probability weighted estimators.

We show that what the practitioner feels when a good propensity model helps is
a reduction in exposure to misspecification, not in sampling variance.  The
invariance of the efficient variance is an algebraic fact about the residual
structure of $\eif$, tied to $\E[Y - \mu_1 \mid X, A = 1] = 0$, and it does
not extend to the geometry of displacements, where the sensitivity reads
derivatives of $\pi_0$ that the variance never sees. Although we focus on the
mean under selection bias for simplicity of exposition, this same pattern
recurs for any doubly-robust functional whose influence function is orthogonal
to the nuisance score in the sense of \citet{RobinsRoZh94, Hahn98}: the
variance is insensitive to knowing the nuisance, the sensitivity is not.

We characterize the reduction in sensitivity a known propensity score can
provide.  Knowing the propensity $\pi_0$ as a function on $\mathcal{X}$ is the
belief that the permissible displacements preserve it. A covariate
displacement advects the propensity along its gradient,
$\pi_0(x + t v_x) = \pi_0(x) + t\,v_x(x)\cdot\nabla\pi_0(x) + o(t)$, so
preserving $\pi_0$ pointwise requires the covariate velocity to be tangent to
the level set of $\pi_0$ at every $x$,
\[
  \Tot^{\mathrm{known}\text{-}\pi} = \bigl\{v = (v_x, v_y) : v_x(x)\cdot\nabla\pi_0(x) = 0 \ \text{a.s.}\bigr\}.
\]
This is a pointwise constraint, not the weaker global orthogonality
$\E[v_x\cdot\nabla\pi_0] = 0$, which would permit a displacement that raises
$\pi_0$ in one region and lowers it in another.

Computing the local sensitivity over this class amounts to projecting
$\nabla\eif$ onto $\Tot^{\mathrm{known}\text{-}\pi}$. Because the class
constrains only the covariate velocity, the projection leaves the outcome
coordinate $\partial_y\eif$ untouched and acts pointwise on the covariate
gradient $\partial_x\eif$, sending it to the hyperplane orthogonal to
$\nabla\pi_0(x)$. The propensity-slope term of~\eqref{eq:grad-x} points along
$\nabla\pi_0$, so the projection annihilates it, while the regression-slope
term keeps only its component $\Pi_{\nabla\pi_0^\perp}\nabla\mu_1$ tangent to
the level set. Knowing the propensity thus erases the propensity slope and
trims the regression slope to its tangential part, leaving the outcome
coordinate alone, giving the known-propensity sensitivity 
\begin{equation}
  \sens\bigl(\theta; \Tot^{\mathrm{known}\text{-}\pi}\bigr)^2
  = \E_{P_0}\!\Bigl[\bigl(\tfrac{1}{\pi_0} - 1\bigr)\,\bigl\|\Pi_{\nabla\pi_0^\perp}\nabla\mu_1\bigr\|^2\Bigr]
  + \E_{P_0}\!\Bigl[\tfrac{1}{\pi_0}\Bigr],
  \label{eq:known-pi-sens}
\end{equation}
where $\Pi_{\nabla\pi_0^\perp}$ is the projection that removes the direction
$\nabla\pi_0(x)$ at each $x$. The regression slope enters only through its
component tangent to the propensity level set, reducing to $\|\nabla\mu_1\|^2$
only when $\nabla\mu_1 \perp \nabla\pi_0$ almost surely.

Comparing the unrestricted gradient energy~\eqref{eq:grad-energy-aipw} with
the known-propensity sensitivity~\eqref{eq:known-pi-sens} isolates what
knowing $\pi_0$ is worth. The semiparametric efficiency bound is unchanged
whether $\pi_0$ is known or estimated nonparametrically~\citep{Hahn98},
$\Veff^{\mathrm{known}\text{-}\pi} = \Veff = \mathrm{Var}(\mu_1) +
\E[\sigma_1^2/\pi_0]$, yet once we forbid the displacement from misspecifying
$\pi_0$ the sensitivity strictly drops, by
\begin{equation}
  \sens(\theta; \Tot)^2 - \sens(\theta; \Tot^{\mathrm{known}\text{-}\pi})^2
  = \E_{P_0}\!\Bigl[\bigl(\tfrac{1}{\pi_0} - 1\bigr)\,\bigl\|\Pi_{\nabla\pi_0}\nabla\mu_1\bigr\|^2\Bigr]
  + \E_{P_0}\!\Bigl[\tfrac{\sigma_1^2\,\|\nabla\pi_0\|^2}{\pi_0^3}\Bigr] \;>\; 0.
  \label{eq:hahn-gap}
\end{equation}
Knowing the propensity never increases the sensitivity, and the drop is strictly
positive unless, at almost every covariate value with $\nabla\pi_0 \ne 0$, the
outcome is noiseless ($\sigma_1^2 = 0$) and $\nabla\mu_1$ is orthogonal to
$\nabla\pi_0$.

The value of knowing the propensity has two parts, both carried by the
propensity gradient $\nabla\pi_0$. The propensity-slope term
$\E[\sigma_1^2 \|\nabla\pi_0\|^2/\pi_0^3]$ grows with how steeply assignment
varies; the regression-slope term
$\E[(\tfrac{1}{\pi_0} - 1)\|\Pi_{\nabla\pi_0}\nabla\mu_1\|^2]$ grows instead
with how closely the direction of $\nabla\pi_0$ aligns with $\nabla\mu_1$,
through
$\|\Pi_{\nabla\pi_0}\nabla\mu_1\|^2 =
(\nabla\mu_1\cdot\nabla\pi_0)^2/\|\nabla\pi_0\|^2$, and is invariant to the
magnitude of $\nabla\pi_0$. The value grows with the strength of
confounding-by-indication and vanishes when assignment is unrelated to the
covariates. The randomized trial is an interesting edge case, with a
propensity known to be constant in the covariates, $\nabla\pi_0 = 0$; here
knowing the propensity lowers neither the efficient variance nor the
sensitivity even when the outcome is noisy. This accords with a long-standing
finding in the design of such trials where the precision gained by covariate
adjustment has been observed to come from the association between the
covariates and the outcome rather than from the propensity
\citep{WilliamsonFW14, Zeng21}.

In sum, knowing the propensity improves robustness exactly when it varies,
that is, in the observational settings where the analyst must estimate it in
the first place.

\subsection{Impact of unobserved confounding concentrates where overlap is
  thin}
\label{sec:confounding}

We now turn our attention to unobserved confounders that lead to violations in
the ignorability assumption $Y(1) \perp A \mid X$. Classical global
sensitivity analysis bounds how much a hidden variable can reweight the
propensity or the outcome density and reports a partial identification
interval on the estimand~\citep{Rosenbaum87}. Under our geometric framework,
we can model the impact of unobserved confounding as a displacement of the
treated outcome.  This lets us see what the worst confounder looks like as an
explicit function of the covariate, i.e., we can learn where in the sample
space confounding does its damage, not only how large that damage can be.  Any
belief about how far the outcome can move along a coordinate can also be
encoded as a restriction on the velocity.

We demonstrate the flexibility of our modeling apparatus by illustrating how
local changes \emph{under} the ignorability assumption can be viewed as
sensitivity to confounding. Our first observation is that unobserved
confounding impacts inferential results when it silently changes the
distribution of $Y|X, A=1$; because we observe $Y = A\,Y(1)$, the response
coordinate carries information about $Y(1)$ only on the treated.  A (local)
confounder is therefore a velocity on the response coordinate supported on the
treated, and the $L^2(P_0)$ energy of such a field $v = (0, v_y)$ is exactly
$\E_{P_0}[\|v\|^2] = \E_{P_0}[A\,v_y^2]$. Taking this energy as the budget
gives the velocity class
\begin{equation}
  \Tot_{\mathrm{conf}} = \bigl\{v = (0, v_y) : \operatorname{supp} v_y \subseteq \{a = 1\}\bigr\},
  \qquad \text{budget } \E_{P_0}[A\,v_y^2] \le 1.
  \label{eq:conf-class}
\end{equation}
The treatment indicator $A$ multiplying $v_y$ is not a modeling choice but a
consequence of $Y = A\,Y(1)$: a displacement of the treated potential outcome
registers in the observed law only where $A = 1$, so the structure of the
observation forces both the support of $v_y$ and the weight in the budget.

\begin{figure}[t]
  \centering
  \includegraphics[width=0.62\textwidth]{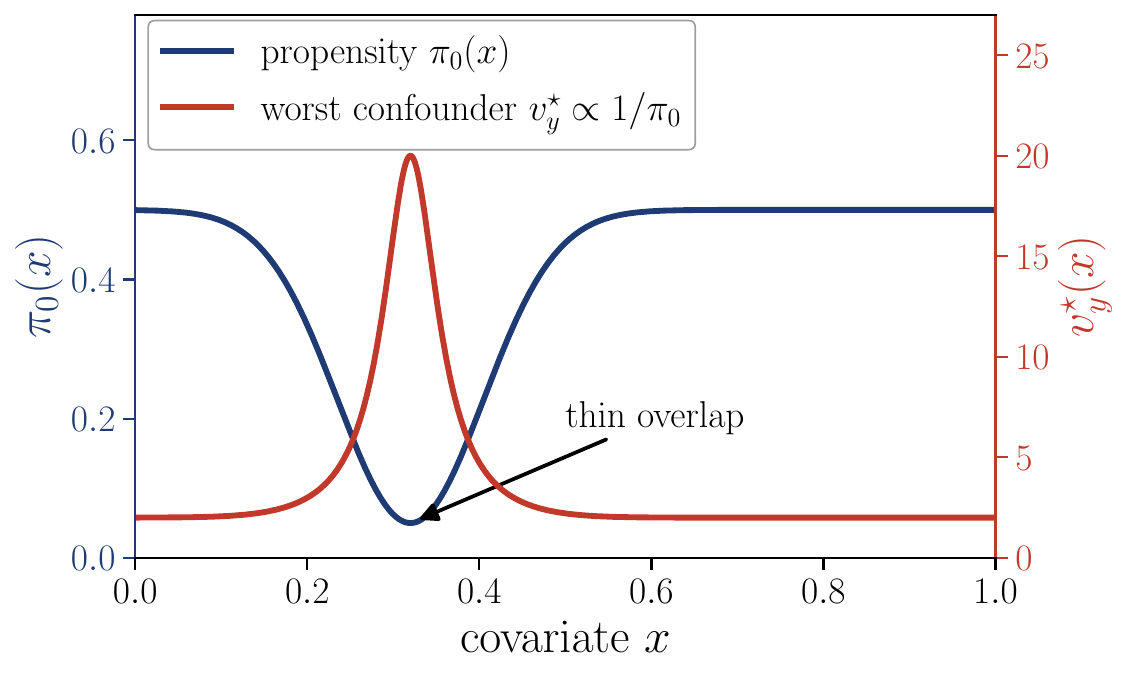}
  \caption{The worst-case unobserved confounder $v_y^\star \propto 1/\pi_0$
    (red) places its distortion where overlap $\pi_0$ (blue) is thin. A
    confounder of the same budget but spread evenly across the population
    moves the estimand far less.}
  \label{fig:conf-overlap}
\end{figure}

Pairing $v$ against the outcome gradient~\eqref{eq:grad-y}, the first-order
change~\eqref{eq:transport-derivative} is $\E_{P_0}[(A/\pi_0)\,v_y]$. Since
$v_y$ is supported on the treated, conditioning on $X$ and using
$\E_{P_0}[A\,v_y \mid X] = \pi_0(X)\,\E_{P_0}[v_y \mid X, A = 1]$ cancels the
propensity,
\begin{equation}
  \frac{d}{dt}\theta(P_t)\Big|_{t=0}
  = \E_{P_0}\!\Bigl[\frac{A}{\pi_0(X)}\,v_y\Bigr]
  = \E_{\Px}\bigl[\E[v_y \mid X, A = 1]\bigr],
  \label{eq:conf-change}
\end{equation}
the average over covariates of the mean treated-arm distortion at each $x$. A
distortion of fixed conditional size moves the estimand by the same amount at
every $x$, while the budget charges it less where the treated are scarce.

Maximizing~\eqref{eq:conf-change} over the budget in~\eqref{eq:conf-class} gives
\begin{equation}
  \sens(\theta; \Tot_{\mathrm{conf}}) = \sqrt{\E_{P_0}[1/\pi_0]},
  \qquad v_y^\star(x) \;\propto\; \frac{1}{\pi_0(x)} \ \text{ on the treated.}
  \label{eq:conf-worst}
\end{equation}
The worst confounder is a function of the covariate: it is inversely
proportional to the propensity $\pi_0(x)$, so it loads its distortion onto the
covariate region where the treated are scarce. Figure~\ref{fig:conf-overlap}
draws $v_y^\star(x)$ against a propensity with a low-overlap region.
Confounding does the most damage where the data are thinnest.

The amplification of the worst confounder over an evenly spread one measures
how heterogeneous overlap is. A confounder spread evenly over the treated,
$v_y \equiv \text{const}$, moves the estimand by $1/\sqrt{P_0(A=1)}$ per unit
budget, so the ratio of the worst-case to the even confounder is
\begin{equation}
  \frac{\sqrt{\E_{P_0}[1/\pi_0]}}{1/\sqrt{P_0(A=1)}} = \sqrt{\E_{P_0}[1/\pi_0]\,\E_{P_0}[\pi_0]} \;\ge\; 1,
  \label{eq:conf-amp}
\end{equation}
which grows with the spread of $\pi_0$. A confounder is dangerous in
proportion to how it aligns with overlap, not how large it is on average. A
confounder that loads on the well-covered majority is nearly harmless, and one
that loads on the thin-overlap minority is amplified by the overlap
heterogeneity.

The displacement budget also makes structural knowledge about the outcome easy
to add. Write $P_0^{(1)}$ for the treated sub-probability measure so that the
budget is the squared norm $\E_{P_0}[A\,v_y^2] = \|v_y\|_{L^2(P_0^{(1)})}^2$
and the first-order change~\eqref{eq:conf-change} pairs $v_y$ against the
representer $1/\pi_0 \in L^2(P_0^{(1)})$. Suppose the analyst believes the
hidden variable cannot distort the treated outcome differently across a
covariate direction $x_e$, a smoothness prior on how confounding enters the
outcome. This confines the distortion to the subspace
$V_e = \{v_y : \partial_{x_e} v_y = 0\}$ of fields constant along $x_e$, and
the worst-case confounder becomes the $L^2(P_0^{(1)})$ projection of $1/\pi_0$
onto $V_e$,
\begin{equation}
  \sens(\theta; \Tot_{\mathrm{conf}} \cap V_e)
  = \bigl\|\Pi_{V_e}(1/\pi_0)\bigr\|_{L^2(P_0^{(1)})}
  \;\le\; \sqrt{\E_{\Px}[1/\pi_0]},
  \label{eq:conf-rigid}
\end{equation}
the gap to the unrestricted sensitivity~\eqref{eq:conf-worst} being the
confounding this knowledge rules out
(Proposition~\ref{prop:inductive-bias}). A belief that the outcome cannot move
much along a coordinate is awkward to phrase as a bound on how the confounder
reweights the density, but in the transport geometry it is a restriction on
the velocity, and the projected representer returns the sensitivity directly.

The region exposed to confounding and the region hard to estimate need not
coincide. The efficient variance weights inverse overlap by the outcome noise,
$\E[\sigma_1^2/\pi_0]$, while the confounding sensitivity weights it by
nothing, $\E[1/\pi_0]$, so a thin-overlap region with quiet outcomes is
fragile to confounding but cheap to estimate, and a well-covered noisy region
is the reverse.


\section{Discussion}
\label{sec:discussion}


We introduce local sensitivity under transport restrictions as a vehicle in
which an inductive bias can be encoded and priced. The local
sensitivity of an estimand is the dual norm of the influence-function gradient
$\nabla\eif$ over a class of permissible velocity fields, and structural
knowledge enters through the choice of this class: an analyst who admits only
displacements that preserve the propensity, touch the response coordinate
alone, follow a fixed direction, or respect a smoothness prior thereby lowers
the sensitivity by the projection onto the directions ruled out
(Proposition~\ref{prop:inductive-bias}).  The restriction is the inductive
bias, the projected dual norm is its consequence, and the gap between the
unprojected and projected gradient energies measures what the assumption is
worth. Because a velocity field lives on the sample space, the beliefs it
expresses are geometric: a magnitude bound is a smoothness or Lipschitz prior
on how far the world can move, a subspace is a known direction of departure, a
cone is a monotone restriction, and a product structure is a decoupling. Such
beliefs are awkward to phrase as reweightings of the density but immediate as
velocity classes.

We reiterate that the local sensitivity measure we study is itself standard:
the first-order change of an estimand under a transport of the law is an
standard object of the Otto--Wasserstein calculus \citep{Otto01, Villani09}
and it cannot be new. In particular, its worst case over a Wasserstein ball is
the first-order coefficient computed in the distributionally robust
optimization literature \citep{BlanchetMu19, BartlDrObWi21, GaoKl22}: the
unrestricted sensitivity $\sqrt{\E_{P_0}\|\nabla\eif\|^2}$ is exactly this
coefficient by the expansion~\eqref{eq:dro-firstorder}. We take this as given,
and propose the restricted version as a modeling device. The construction is
confined to continuous coordinates, since a velocity displaces mass through a
continuous sample space and a purely discrete observation admits none; even
so, we believe transport restrictions offer a useful alternative to
reweighting, one that makes smoothness and geometric structural knowledge
straightforward to state and to price, and we expect the questions they raise
to repay further study.



\ifdefined\ORStyle
  \bibliographystyle{informs2014}
\else
  \bibliographystyle{abbrvnat}
  \setlength{\bibsep}{.7em}
\fi
\bibliography{refs}

\end{document}